%
%

\documentclass[aip,graphicx]{revtex4-1}

\draft 
\usepackage{amsmath,amssymb}
\usepackage{amsthm,graphicx}
\usepackage{color}
\newtheorem{theorem}{Theorem}[section]

\newtheorem{remark}[theorem]{Remark}
\numberwithin{equation}{section}
\newcommand{\R}{\mathbb R}
\begin{document}
	
	
	\title{Controlling Mackey--Glass chaos} 
	
	
	
	\author{ G\'abor Kiss}
 \affiliation{Bolyai Institute, University of Szeged, Szeged H-6720, Hungary}
	\author{Gergely R\"{o}st}
	\email[]{rost@math.u-szeged.hu}	
		\affiliation{Bolyai Institute, University of Szeged,  Szeged H-6720, Hungary, and \\ Wolfson Centre for Mathematical Biology, Mathematical Institute, University of
Oxford, United Kingdom}

	
	\date{\today}

	\date{\empty} 
	

	\begin{abstract}
		The Mackey--Glass equation is the representative example of delay induced chaotic behavior. Here we propose various control mechanisms so that otherwise erratic solutions are forced to converge to the positive equilibrium or to a periodic orbit oscillating around that equilibrium. We take advantage of some recent results of the delay differential literature, when a sufficiently large domain of the phase space has been shown to be attractive and invariant, where the system is governed by monotone delayed feedback and chaos is not possible due to some Poincar\'e--Bendixson type results. We systematically investigate what control mechanisms are suitable to drive the system into such a situation, and prove that constant perturbation, proportional feedback control, Pyragas control and state dependent delay control can all be efficient to control Mackey--Glass chaos with properly chosen control parameters.
		\\
	\end{abstract}
	\pacs{}
	\maketitle
	\textbf{The Mackey--Glass equation, which was proposed to illustrate nonlinear phenomena in physiological control systems, is a classical example of a simple looking time delay system with very complicated behavior. Here we use a novel approach for chaos control: we prove that with well chosen control parameters, all solutions of the system can be forced into a domain where the feedback is monotone, and by the powerful theory of delay differential equations with monotone feedback we can guarantee that the system is not chaotic any more. We show that this domain decomposition method is applicable with the most common control terms. Furthermore, we propose an other chaos control scheme based on state dependent delays.}
	\section{Introduction}
	\noindent

	The Mackey-Glass equation 
	\begin{equation}
		x'(t)=-\mu x(t)+\frac{p x(t-\tau)}{1+x(t-\tau)^n}, \qquad \mu,p,n,\tau>0, \label{mgeq}
	\end{equation}
	was introduced in 1977 to illustrate some nonlinear phenomena arising in physiological control systems\cite{MG1977}.
	Here $'$ denotes the temporal derivative of a scalar state variable $x(t)$, and the function $f(\xi)=\frac{p\xi}{1+\xi^n}$ represents a feedback mechanism with time delay $\tau$. The interesting situation is $n$ being large when the function $f$ has a distinctive unimodal shape, and in the paper we consider only this case (at least $n>2$). The Mackey--Glass equation provides a benchmark for the application of new techniques for nonlinear delay differential equations as it can generate diverse dynamics, from convergence to oscillations with different characteristics and even chaotic behavior. Despite intensive research over the decades with a number of analytical \cite{rostwu, lrDCDS, gyori}, numerical \cite{farmer,junges,mensour98}, and even experimental studies \cite{amil,kittel}, the emergence of such complexity is not fully understood yet.  
	
	Recent decades showed a growing interest towards chaos control, and several methods have been proposed and applied\cite{handbook}. 
	In this paper we use another strategy,  which we think is novel in the context of chaos control: instead of controlling a particular unstable periodic orbit, we drive all solutions into a domain where the system is governed by monotone feedback.
	
	The delay differential equation
	\begin{equation}
		x'(t)=-\mu x(t)+f(x(t-\tau))
	\end{equation}
	with monotone feedback (where $f'(x)<0$ for all $x$, or $f'(x)>0$ for all $x$) has been widely studied in the mathematical literature and a comprehensive description is available on its global dynamic behaviors for some classes of monotone nonlinearities 
	\cite{krisztin}, 
	and there have been some further interesting new developments as well recently \cite{gabi,moni}.
	One important result is a Poincar\'e--Bendixson type theorem of Mallet-Paret and Sell \cite{mpsell}, which implies that in the case of monotone feedback, bounded solutions converge either to an equilibrium or to a periodic orbit, hence chaotic trajectories are not possible. 
	
	The complexity of the Mackey--Glass equation stems from the combination of time delay and the non-monotonicity of the feedback, and in fact chaotic behavior has been proven for a special class of equations with non-monotone delayed feedback \cite{blw}. A
	domain decomposition method has been proposed
	for unimodal feedback functions\cite{rostwu}, that provides sufficient conditions such that all solutions eventually enter a domain where $f$ is either increasing or decreasing, and in this case the complicated behavior is excluded. In this paper we take advantage of this idea and propose various schemes that can impose such a situation.
	After describing the mathematical background in Section 2, in Section 3 we propose additive control terms, and consider the equation
	\begin{equation}
		x'(t)=-\mu x(t)+\frac{p x(t-\tau)}{1+x(t-\tau)^n}+u(t),\label{mgcontrol}
	\end{equation}
	with control term $u(t)$. We investigate three typical cases, namely constant perturbation $u(t)=k$, proportional feedback control $u(t)=k x(t)$, and the delayed feedback controller $u(t)=k[x(t)-x(t-\tau)]$.  
	We shall say that the chaos is controlled if the system shows complicated behavior for $k=0$, but all solutions eventually enter and remains in some monotone domain of $f$ for some $k\neq 0$, in which case convergence to an equilibrium or to a periodic orbit is guaranteed. In Section 4 we use a different approach: instead of an additive term, we construct a state dependent delay $\tau=\tau(x(t))$ in a proper way so that our domain decomposition method is still applicable. 
	It is important to stress that in this case the form of the controlled equations is of \eqref{mgeq} instead of \eqref{mgcontrol}, and the delay itself will be the subject to the control.
	In Section 5 we illustrate our control mechanisms with a set of numerical simulations, and we conclude the paper with a summary and discussion of the interpretation of our results.
		
	\section{Mathematical background}

	Let $C=C([-\tau,0],\R)$ denote the Banach space of continuous
	functions $\phi: [-\tau,0] \to \R$ with the usual sup norm
	$||\phi||=\max_{-\tau \leq s \leq 0} |\phi(s)|.$
	Given its biological interpretation, traditionally only non-negative solutions of \eqref{mgeq} are studied, hence we restrict our attention the cone
	$$C_+=\{\phi \in C : \phi(s) \geq 0, -\tau \leq s \leq 0 \},$$
	and define the corresponding order intervals
	$$[\phi,\psi]:=\{\zeta \in C: \psi -\zeta \in C_+, \zeta- \phi \in C_+\}.$$ 

	Every $\phi \in C_+$ determines a unique continuous function
	$x=x^\phi: [-\tau,\infty) \to \R$, which is differentiable on
	$(0,\infty)$, satisfies \eqref{mgeq} for all $t>0$, and $x(s)=\phi(s)$
	for all $s\in [-\tau,0]$. It is easy to see that the cone $C_+$ is positively invariant, i.e. a
	solution $x^\phi(t)$ with non-negative initial function $\phi$
	remains non-negative for all $t \geq 0$. Existence and uniqueness extend to \eqref{mgcontrol} too when $u(t)$ has the usually required smoothness, however non-negativity should be checked in each specific case. The segment $x_t
	\in C$ of a solution is defined by the relation $x_t(s)=x(t+s)$,
	where $s \in [-\tau,0]$ and $t\geq 0$, thus $x_0=\phi$ and
	$x_t(0)=x(t)$, and the family of maps
	$$\Phi: [0,\infty)\times C_+ \ni (t,\phi) {\color{red}\mapsto} x_t(\phi):=\Phi_t(\phi)  \in C_+$$ defines a continuous semiflow on $C_+$. For any $\xi \in \R$, we write $\xi_*$
	for the element of $C$ satisfying $\xi_*(s)=\xi$ for all $s \in
	[-\tau,0]$. The equilibria $\xi_*$ of \eqref{mgeq} are given by the solutions
	of $\mu \xi = f(\xi)$.
	The trivial equilibrium is $0_*$, and in addition there exists at most one
	positive equilibrium $K_*$ given by $K=(p/\mu-1)^{1/n}$. Note that $f'(\xi)=p(1-(n-1)\xi^n)(1+\xi^n)^{-2}$,
	so $f'(0)=p$ and there is a unique $\xi_0=(n-1)^{-1/n} $ such that $f'(\xi_0)=0$. The function $f$ is increasing on $[0,\xi_0]$, have its maximum $f(\xi_0)=p(n-1)^{1-1/n}n^{-1}$, and decreasing on $[\xi_0,\infty)$
	with $\lim_{x\to \infty} f(x)=0$.
	Depending on the parameters, there are three fundamental situations:\\
	\textit{(a)} if $\mu \geq p$ then only the zero equilibrium exists; \\
	\textit{(b)} if $\mu <p \leq \mu (1+(n-1)^{-1})$ then there is a positive equilibrium $K_*$ on the increasing part of $f$ (i.e. $K\leq \xi_0)$; \\
	\textit{(c)} if $p > \mu (1+(n-1)^{-1})$ then there is a positive equilibrium $K_*$  on the increasing part of $f$ (i.e $K>\xi_0$ or equivalently $\mu<f(\xi_0)/\xi_0$). \\
	It is well known \cite{rostwu} that in case \textit{(a)} all solutions converge to $0$ and in case \textit{(b)} all positive solutions converge to $K$, regardless of the delay. Thus here we consider only the interesting case \textit{(c)}, when the following numbers

	$$
	\beta:=\frac{f(\xi_0)}{\mu} , \quad \quad
	\alpha:=\frac{f(\beta)}{\mu}=\frac{f(\frac{f(\xi_0)}{\mu})}{\mu} $$  also play a crucial role in
	characterizing the nonlinear dynamics of equation \eqref{mgeq}. A cornerstone of this paper is the following result, which combines Theorem 3.5 (R\"ost \& Wu \cite{rostwu})   and Theorem 8 (Liz \& R\"ost \cite{lrDCDS}), ensuring that the long term dynamics is governed by a monotone part of the feedback function.
	
	\begin{theorem}\label{alap}
		Let $g(x)=\mu^{-1} f(x)$, and assume $g'(0)>1$ and $K>\xi_0$. Then, if either condition
		$$
		g^2(\xi_0)>\xi_0 \eqno{(L)}
		$$
		or 
		$$
		h^2(\xi_0)>\xi_0, \text{ where } h(x)=(1-e^{-\mu \tau})g(x)+e^{-\mu \tau}K  \eqno{(T)}
		$$
		holds, then every solution eventually enters and remains in the domain where $f'$ is negative, hence converging to $K$ or to a periodic solution oscillating around $K$.
	\end{theorem}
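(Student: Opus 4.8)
The plan is to reduce the problem to the scalar dynamics of the feedback map and then invoke the Poincar\'e--Bendixson theory. Concretely, I would first show that every nonnegative solution is eventually trapped in an interval $[\alpha,\beta]$ lying strictly above $\xi_0$, so that the segment $x_t$ eventually lies in the region where $f'<0$; on this absorbing set the feedback is monotone decreasing, and the theorem of Mallet-Paret and Sell \cite{mpsell} then forces every bounded solution to converge to an equilibrium or to a periodic orbit. Since $K$ is the only equilibrium with $x>\xi_0$ and (as one checks) lies in the interior of $[\alpha,\beta]$, the limiting behaviour must be either convergence to $K$ or a periodic solution oscillating about $K$, which is the asserted conclusion.

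The first quantitative step is the upper envelope. Using $f(\xi)\le f(\xi_0)$ for all $\xi\ge 0$, the differential inequality $x'(t)\le -\mu x(t)+f(\xi_0)$ together with a comparison argument gives $\limsup_{t\to\infty}x(t)\le f(\xi_0)/\mu=\beta=g(\xi_0)$. Thus eventually $x(t-\tau)\le\beta$, which is the input for the lower envelope.

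The core of the proof is the lower bound, and here the unimodal shape of $g$ together with the hypothesis $g'(0)>1$ is essential. I would run a bootstrapping scheme on the pair $(\liminf x,\limsup x)$ governed by the scalar map $g$: since $g(\xi_0)=\beta$ and $g(\beta)=g^2(\xi_0)=\alpha$, the forward orbit of the critical value generates the absorbing interval $[\alpha,\beta]$ of the unimodal map, while $g'(0)>1$ makes the trivial fixed point $0$ repelling, which is precisely what prevents the lower envelope from collapsing to $0$ during the transient when $x$ may still visit the increasing branch $[0,\xi_0]$. Condition $(L)$, namely $g^2(\xi_0)=\alpha>\xi_0$, then places this absorbing interval strictly inside the decreasing branch, completing the trapping. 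For the sharper condition $(T)$ I would replace $g$ by the delay-adjusted map $h$: integrating the variation-of-constants formula over one delay window of length $\tau$, starting from the equilibrium value $K$ and freezing the delayed argument, produces exactly $h(x)=(1-e^{-\mu\tau})g(x)+e^{-\mu\tau}K$, so that $h$ rather than $g$ governs the one-step envelope update for finite $\tau$; the requirement $h^2(\xi_0)>\xi_0$ then plays the role of $\alpha>\xi_0$ and yields the same trapping under a weaker hypothesis.

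The main obstacle I anticipate is making the passage from the scalar map to the delay equation rigorous, namely showing that the liminf of the solution is genuinely bounded below by $\alpha$ (respectively by the $h$-iterate) rather than by the naive interval minimum, which would be $0$. This requires exploiting the repelling character of $0$ and controlling the solution while its delayed argument still lies on the increasing branch of $f$, and it is exactly at this point that the two cited results---Theorem 3.5 of R\"ost \& Wu \cite{rostwu} for $(L)$ and Theorem 8 of Liz \& R\"ost \cite{lrDCDS} for $(T)$---supply the needed estimates; verifying that their hypotheses are met under $g'(0)>1$ and $K>\xi_0$, and that together they cover both alternatives, is the final step.
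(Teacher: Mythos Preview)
Your proposal is correct and aligns with the paper's treatment: the paper does not supply an independent proof of this theorem but states it as the combination of Theorem~3.5 of R\"ost~\&~Wu \cite{rostwu} (for condition $(L)$) and Theorem~8 of Liz~\&~R\"ost \cite{lrDCDS} (for condition $(T)$), with the subsequent paragraph noting only that $[\alpha_*,\beta_*]$ is attractive and invariant and that $(L)$ reads $\alpha>\xi_0$. Your sketch unpacks exactly this---upper envelope $\beta=g(\xi_0)$, lower envelope via the bootstrap governed by $g$ (respectively $h$), then Mallet-Paret--Sell---and, appropriately, defers the nontrivial $\liminf$ estimate to the same two cited references.
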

	The assumption of this theorem means that we are in case \textit{(c)}. Then the interval $[\alpha_*,\beta_*]$ is attractive and invariant \cite{rostwu}, and condition $(L)$ means $\alpha>\xi_0$. Results relating attractive invariant intervals of the discrete map $f$ to attractive invariant intervals for \eqref{mgeq} originate from Ivanov \& Sharkovsky \cite{IS}, and recently have been successfully used for other problems as well \cite{lizruiz,xingfu}.
	Note that this condition is independent of $\tau$, hence in this situation chaotic behavior can not appear by increasing the delay. The delay dependent condition $(T)$ is built on earlier works \cite{gyori,liztrofim}.

	\section{Controlling Mackey-Glass chaos with additive terms}
	
	Our aim is to choose our additive control term $u(t)$ from three common classes, in a way that some analogue of Theorem \ref{alap} holds for \eqref{mgcontrol}.
	
	\subsection{Constant perturbation control}
	For any $k\in\mathbb R$, we consider
	\begin{equation}\label{eq:mgconstc}
		x'(t)=-\mu x(t)+\frac{p x(t-\tau)}{1+x(t-\tau)^n}+ k.
	\end{equation}
	\begin{theorem}\label{constpert}
		Assume that $K>\xi$ but $(L)$ is not satisfied, that is, $g^2(\xi_0)\leq\xi_0$ in \eqref{mgeq}. Then the following statements hold:
		
		(i) there is a $k_*<\mu \xi_0$ such that for all $k\geq k_*$, \eqref{eq:mgconstc} has no complicated solution;
		
		(ii) there is an explicitly computable $k_1$, such that for $k<k_1$, \eqref{eq:mgconstc} has no equilibria and solutions become unfeasible;
		
		(iii) for $k_1<k<k_2:=\mu \xi_0-f(\xi_0)$, there are two positive equilibria $K_1$ and $K_2$, and solutions with initial function $\phi \in [(K_1+k/\mu)_*,\xi_{0*}]$ converge to $K_2$; 
		
		(iv) there exists a $k_3$ such that for $k_2<k<k_3$, 
		\eqref{eq:mgconstc} has no complicated solutions.
		
	\end{theorem}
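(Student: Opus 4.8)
The plan is to treat \eqref{eq:mgconstc} uniformly as a Mackey--Glass type equation $x'(t)=-\mu x(t)+F(x(t-\tau))$ with the shifted feedback $F(\xi)=f(\xi)+k$. Since $F'=f'$, this $F$ is again unimodal with the \emph{same} critical point $\xi_0$, so the scaled interval map becomes $G(x)=\mu^{-1}F(x)=g(x)+k/\mu$, and the endpoints of the Rost--Wu attracting interval are $\beta_k:=G(\xi_0)=\beta+k/\mu$ and $\alpha_k:=G^2(\xi_0)=g(\beta+k/\mu)+k/\mu$. Equilibria of \eqref{eq:mgconstc} are exactly the fixed points of $G$, i.e. the solutions of $\phi(\xi):=\mu\xi-f(\xi)=k$. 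I would first record the elementary shape of $\phi$ on $[0,\infty)$: $\phi(0)=0$, $\phi$ strictly decreases on $(0,\xi_a)$ and strictly increases on $(\xi_a,\infty)$, where $\xi_a<\xi_0$ is the unique point with $f'(\xi_a)=\mu$; hence $\phi$ attains its global minimum $k_1:=\phi(\xi_a)<0$, and $k_2=\mu\xi_0-f(\xi_0)=\phi(\xi_0)\in(k_1,0)$. Counting intersections of the line at height $k$ with the graph of $\phi$ yields the bifurcation picture used throughout: no positive equilibrium for $k<k_1$; two equilibria $K_1<K_2$ on the increasing branch $(0,\xi_0)$ for $k_1<k<k_2$; and a single equilibrium $K>\xi_0$ on the decreasing branch once $k>k_2$ (together with a residual equilibrium below $\xi_a$ while $k<0$).

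For (i) I would apply Theorem~\ref{alap} to the shifted feedback $F$: the hypotheses $G'(0)=p/\mu>1$ and $K>\xi_0$ hold, and condition $(L)$ for \eqref{eq:mgconstc} reads $\alpha_k>\xi_0$. Because $g\ge0$ we get the crude bound $\alpha_k\ge k/\mu$, so $\alpha_k>\xi_0$ for every $k\ge\mu\xi_0$; as $\alpha_k$ is continuous and $\alpha_{\mu\xi_0}=g(\beta+\xi_0)+\xi_0>\xi_0$ strictly, the inequality persists on a left neighbourhood of $\mu\xi_0$, so setting $k_*=\inf\{\kappa:\alpha_k>\xi_0\ \text{for all}\ k\ge\kappa\}$ gives $k_*<\mu\xi_0$. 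Theorem~\ref{alap} then excludes complicated solutions for all $k\ge k_*$.

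For (ii) I would use the one-dimensional reduction. When $k<k_1$ the map $G$ has no fixed point in $[0,\infty)$ and, since $G(0)=k/\mu<0$, the continuous function $G-\mathrm{id}$ is negative at $0$ and never vanishes, hence $G(x)<x$ for all $x\ge0$; the Ivanov--Sharkovsky interval-map comparison underlying Theorem~\ref{alap} then forces the iterates to decrease without bound and the solutions of \eqref{eq:mgconstc} to leave the cone $C_+$, i.e.\ to become unfeasible, with the explicit threshold $k_1=\mu\xi_a-f(\xi_a)$. For (iii), on $[0,\xi_0]$ the feedback $f$ is increasing, so \eqref{eq:mgconstc} generates an order-preserving (cooperative) semiflow and the convergence theory of monotone delay equations applies: every bounded solution tends to an equilibrium. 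A linearisation gives $f'(K_1)>\mu>f'(K_2)$, so $K_1$ is unstable and $K_2$ asymptotically stable; I would then exhibit constant sub- and supersolutions, with supersolution $\xi_{0*}$ (valid since $-\mu\xi_0+f(\xi_0)+k=k-k_2<0$) and a lower endpoint chosen strictly above $K_1$ (the first interval-map iterate of $K_1$, which lies in $(K_1,\xi_0]$), producing a positively invariant order interval inside the basin of $K_2$; monotone comparison then yields convergence to $K_2$.

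Finally, for (iv) the decisive observation is that at $k=k_2$ one has $\beta_{k_2}=\beta+k_2/\mu=\xi_0$ and hence $\alpha_{k_2}=g(\xi_0)+k_2/\mu=\xi_0$, while $\tfrac{d}{dk}\alpha_k=\mu^{-1}\bigl(g'(\beta+k/\mu)+1\bigr)$ equals $\mu^{-1}>0$ at $k=k_2$ because $g'(\xi_0)=0$. Thus $\alpha_k>\xi_0$ on a right neighbourhood of $k_2$, so condition $(L)$ for the shifted problem is restored immediately above $k_2$; I take $k_3$ to be the first value $>k_2$ at which $\alpha_k$ returns to $\xi_0$. On $(k_2,k_3)$ the global attractor sits in the region where $F$ is decreasing, and the Mallet--Paret--Sell theorem rules out chaos. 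The main obstacle I anticipate is in (iii)--(iv): pinning down the exact invariant order interval in (iii), and, in (iv), making the ``attractor lies above $\xi_0$'' step rigorous in the presence of the extra low equilibrium $K_1$ that survives for $k\in(k_2,0)$---one must verify that the attracting-interval estimate still gives $\alpha_k>\xi_0$, so that every solution eventually enters the decreasing-feedback region and $K_1$ becomes dynamically irrelevant before Poincar\'e--Bendixson is invoked.
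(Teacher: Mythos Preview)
Your framework is the same as the paper's, only in different coordinates: the paper performs the horizontal shift $y=x-k/\mu$, $f_k(\xi)=f(\xi+k/\mu)$, whereas you keep $x$ and shift vertically to $F=f+k$. Parts (i) and (iv) then match the paper almost verbatim: your computation $\alpha_{k_2}=\xi_0$, $\tfrac{d}{dk}\alpha_k\big|_{k_2}=\mu^{-1}(g'(\xi_0)+1)=\mu^{-1}>0$ is exactly the paper's $D(k_2)=0$, $D'(k_2)=1$, and your continuity argument for $k_*<\mu\xi_0$ is the paper's as well.

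The real gap is in (ii). From $G(x)<x$ on $[0,\infty)$ you cannot simply invoke ``the Ivanov--Sharkovsky interval-map comparison underlying Theorem~\ref{alap}''. Those results transfer \emph{attracting invariant intervals} of the map to the delay equation; they say nothing when the map has no fixed point and its orbits march off to $-\infty$. In particular, the upper estimate $\limsup x(t)\le\beta_k$ comes for free from $F\le F(\xi_0)$, but the next step of the iteration does not bite, because $F$ is not monotone on $[0,\beta_k]$ and you get back only $F(x(t-\tau))\le F(\xi_0)$. You need a direct argument that solutions exit $C_+$ in finite time. The paper supplies one: with $v(t)=y(t)+\int_{t-\tau}^t f_k(y(s))\,ds$ one has $v'(t)=-\mu y(t)+f_k(y(t))\le-\min_{\xi\ge -k/\mu}\bigl(\mu\xi-f_k(\xi)\bigr)<0$, so $v(t)$ drops below $-k/\mu$ in finite time, and $y(t)\le v(t)$ (since $f_k\ge0$ on the feasible range) forces $y$ below $-k/\mu$, i.e.\ $x<0$. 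In your coordinates the same functional is $v(t)=x(t)+\int_{t-\tau}^t F(x(s))\,ds$ with $v'(t)=F(x(t))-\mu x(t)\le -(k_1-k)<0$; this is the missing ingredient.

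A small slip in (iii): ``the first interval-map iterate of $K_1$'' is $K_1$ itself, since $K_1$ is a fixed point of $G$; it is not strictly above $K_1$. What actually works---and what the paper does---is to take $K_1$ as the lower endpoint: on $[K_1,\xi_0]$ the feedback is increasing, $G(K_1)=K_1$ and $G(\xi_0)=\beta_k<\xi_0$ give invariance, and then the monotone-feedback convergence result (R\"ost--Wu, Prop.~2) yields convergence to $K_2$. Your concern at the end about the residual small equilibrium for $k\in(k_2,0)$ is legitimate and is indeed glossed over in the paper's (iv) as well; for $k>0$ it disappears, so the clean application of Theorem~\ref{alap} is only for $k\in(\max\{0,k_2\},k_3)$.
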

	\begin{proof}
		After using the change of variable $y=x-\frac{k}{\mu}$, \eqref{eq:mgconstc} reads as
		$$
		y'(t)=-\mu y(t)+p\frac{ y(t-\tau)+\frac{k}{\mu}}{1+\left(y(t-\tau)+\frac{k}{\mu}\right)^n}
		$$
		That is
		\begin{equation}\label{eq:y}
			y'(t)=-\mu y(t)+f_k(y(t-\tau))
		\end{equation}
		with $f_k(\xi)=f\left(\xi+\frac{k}{\mu}\right)$, thus adding the constant perturbation $k$ has the same effect as shifting the graph of $f$ by $k/\mu$. Note that we are interested only in non-negative solutions $x(t)$ of \eqref{eq:mgconstc}, that is $y(t)\geq -k/\mu$, and we call such solutions feasible.  Let $\hat\xi_0=\xi_0-\frac{k}{\mu}$, $
		\hat\beta=\frac{f_k(\hat\xi_0)}{\mu} ,
		\hat\alpha=\frac{f_k(\hat\beta)}{\mu}.$
		Clearly,  $f_k'(\hat\xi_0)=0$ and $\hat\beta=\beta$, that is, $\hat\alpha=f_k(f(\xi_0))$. 
		
		(i) For $k>0$, the graph of $f$ is shifted to the left, solutions remain positive and we also have $\hat \alpha>0$, with $\liminf_{t\to \infty} y(t)\geq  \hat \alpha$ (analogously to Theorem 3.5 from R\"{o}st \& Wu\cite{rostwu}). At $k\geq \mu \xi_0$, $\hat \xi_0\leq 0<\hat \alpha$, and by continuity the relation  $\hat \xi_0<\hat \alpha$ must hold on some interval $(k_*,\mu\xi_0)$ as well.
		
		(ii) We shift the graph of $f$ to the right until equilibria are destroyed. In the critical case $k=k_1$, $f$ is tangential to $\mu \xi$, so first we find the unique $\xi_\mu>0$ such that $\mu=f'(\xi_\mu)=\frac{p(1+(1-n)\xi_\mu^n)}{(1+\xi_\mu^n)^2}$. This is a quadratic equation in $\xi_\mu^n$, and taking its positive root we find
		$$\xi_\mu=\left(\frac{-2\mu-p(n-1)+\sqrt{4p\mu n+p^2(n-1)^2}}{2\mu}\right)^{\frac{1}{n}}.$$
		When the graph is shifted by $k_1/\mu$, the tangent line of the shifted graph $f_{k_1}$ with slope $\mu$ is exactly the line $\mu \xi$, so we must have $f(\xi_\mu)=\mu (\xi_\mu-k_1/\mu)$,
		which gives $k_1=f(\xi_\mu)-\mu \xi_\mu$.
		For $k<k_1$, $f_k(\xi)<\mu \xi$ holds on $[-k/\mu,\infty)$, where $f_k$ is defined. For a solution $y(t)$, let
		$v(t):=y(t)+\int_{t-\tau}^t f_k(y(s))ds$, then $v'(t)=-\mu y(t)+f(y(t))<-\min_{\xi\geq -k/\mu}[\mu \xi-f_k(\xi)]<0. $ This means that $v(t)$ becomes smaller than $-k/\mu$ in finite time, but due to $y(t)<v(t)$, each solution $y(t)$ becomes unfeasible.  
		
		(iii) For $k_1<k<0$, there are always two equilibria of \eqref{eq:y}, now we are looking for an other critical value $k_2$ that separates the cases when the larger equilibrium is on the decreasing part of $f_k$ from when both are on the increasing part. The critical case is characterized by one of the equilibria being $\hat \xi_0$, that is $\mu \hat \xi_0=f_k(\hat \xi_0)=f(\xi_0)$, and $k_2=\mu(\xi_0-f(\xi_0))$ follows. For $k_1<k<k_2$ there are two equilibria, $\hat K_1$ and $\hat K_2<\hat \xi_0$. It is easy to see that there are initial functions $\phi$ with $\phi(0)=-k/\mu$, $\phi(\theta)$ small for $\theta<0$ such that the derivative of the solution is negative at zero, thus unfeasible solutions exist.
		To avoid such situations, we restrict our attention to the interval $[\hat K_{1*},\hat \xi_{0*}]$, where $f_k$ is monotone increasing. For solutions with segments from this interval,  $y(t)=\hat K_1$ implies $y'(t)\geq-\mu K_1+f_k(\hat K_1)=0$, and $y(t)=\hat \xi_0$ implies $y'(t)\leq -\mu \hat \xi_0+f_k(\hat \xi_0)<0$, therefore this interval is invariant. Now we can apply Proposition 2 from R\"{o}st \& Wu\cite{rostwu} to show that all solutions in this interval converge to $\hat K_2$. Transforming back to variable $x$ we obtain (iii).
		
		(iv) First notice that $f_{k_2}(f_{k_2}(\hat \xi_0)/\mu)=\mu \hat \xi_0.$ Our goal is to show that $$D(k):=f_{k}(f_{k}(\hat \xi_0)/\mu)-\mu \hat \xi_0=f_{k}(f( \xi_0)/\mu)-\mu  \xi_0+k=f((f( \xi_0)+k)/\mu)-\mu  \xi_0+k >0$$ in an interval $(k_2,k_3)$, then an analogue of Theorem \ref{alap} provides the result. Differentiating  with respect to $k$ gives
		$$D'(k)=f'((f( \xi_0)+k)/\mu)/\mu+1, $$
		and evaluating at $k_2=\mu \xi_0-f(\xi_0)$ we arrive at
		$$D(k_2)=0, \quad D'(k_2)=f'(\xi_0)/\mu+1=1>0. $$
		
	\end{proof}

	\subsection{Proportional feedback control}
	
	In this subsection, we consider $u(t)=kx(t)$.
	The rearrangement
	\begin{equation}\label{eq:prop} x'(t)=-(\mu-k) x(t)+\frac{p x(t-\tau)}{1+x(t-\tau)^n}\end{equation}
	shows that the control has no effect on key properties of the nonlinearity in \eqref{mgeq}.
	
	With $w=\mu-k$, Theorem \ref{alap} can be directly applied.
	\begin{theorem} \label{propthm} Assume $\alpha<\xi_0<K$. Then the following holds:
		
		(i) there is a $k_*<0$, such that for $k\in (\mu-f(\xi_0)/\xi_0,k_*)$, 
		\eqref{eq:prop} has no complicated solutions; 
		
		(ii) {\color{red}i}f $\mu-p<k<\mu-f(\xi_0)/\xi_0$, then all solutions converge to  $(p/(\mu-k)-1)^{1/n}$; 
		
		(iii) if $k\leq \mu-p$ then all solutions converge to 0; 
		
		(iv) if $k>\mu$, all solutions converge to infinity.
	\end{theorem}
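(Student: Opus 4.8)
The plan is to exploit the substitution $w=\mu-k$ indicated before the statement: with this choice, \eqref{eq:prop} is literally \eqref{mgeq} with $\mu$ replaced by $w$. Consequently the whole apparatus of Section 2 transfers verbatim with $w$ in place of $\mu$: the critical point $\xi_0=(n-1)^{-1/n}$ of $f$ is unchanged (it does not depend on $\mu$), the positive equilibrium becomes $K_w=(p/w-1)^{1/n}$, and the iterates read $\beta_w=f(\xi_0)/w$, $\alpha_w=f(\beta_w)/w=g_w^2(\xi_0)$ with $g_w=w^{-1}f$. The trichotomy $(a)/(b)/(c)$ and Theorem~\ref{alap} apply with these $w$-dependent quantities, so the entire proof reduces to tracking the thresholds in $w$ and translating them back through $k=\mu-w$. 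The relevant cut-offs are $w=p$ (existence of a positive equilibrium, $k=\mu-p$), the case $(b)$--$(c)$ boundary $w_c:=f(\xi_0)/\xi_0=p(n-1)/n$ (where $K_w=\xi_0$, corresponding to $k=\mu-f(\xi_0)/\xi_0$), and $w=0$ (corresponding to $k=\mu$).

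Statements (ii), (iii) and (iv) are then essentially immediate. For (iii), $k\le\mu-p$ means $w\ge p$, which is case $(a)$, so every positive solution of the $w$-equation tends to $0$. For (ii), $\mu-p<k<\mu-f(\xi_0)/\xi_0$ translates to $f(\xi_0)/\xi_0<w<p$, i.e.\ case $(b)$ with $K_w<\xi_0$; hence all positive solutions converge to $K_w=(p/(\mu-k)-1)^{1/n}$. For (iv), $k>\mu$ gives $w<0$, and since $f\ge 0$ on $[0,\infty)$ we have $x'(t)\ge -w\,x(t)$; thus $t\mapsto x(t)e^{wt}$ is nondecreasing, whence $x(t)\ge x(0)e^{|w|t}\to\infty$ for any solution starting from a nontrivial positive initial function.

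The substantive part is (i), where I would certify condition $(L)$ for the $w$-system, i.e.\ $\alpha_w>\xi_0$, in a range of $k$ just above $\mu-f(\xi_0)/\xi_0$. At the boundary $w=w_c$ one has $\beta_{w_c}=f(\xi_0)/w_c=\xi_0$, hence $\alpha_{w_c}=f(\beta_{w_c})/w_c=f(\xi_0)/w_c=\xi_0$, so $(L)$ is marginal there. Differentiating $\alpha_w=w^{-1}f\!\left(f(\xi_0)/w\right)$ and using $f'(\xi_0)=0$ to kill the term coming from the variation of $\beta_w$, one finds $\left.\frac{d\alpha_w}{dw}\right|_{w_c}=-f(\xi_0)/w_c^{2}<0$. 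Therefore decreasing $w$ below $w_c$ (equivalently increasing $k$ above $\mu-f(\xi_0)/\xi_0$) pushes $\alpha_w$ strictly above $\xi_0$, so $(L)$ holds immediately above that threshold. The hypothesis $\alpha<\xi_0$ says precisely that $(L)$ fails at $w=\mu$ (i.e.\ at $k=0$); by continuity of $k\mapsto\alpha_{w}$ there is a first value $k_*\in(\mu-f(\xi_0)/\xi_0,0)$ at which $\alpha_w$ returns to $\xi_0$, so $k_*<0$ and $(L)$ holds on the whole interval $(\mu-f(\xi_0)/\xi_0,k_*)$. On that interval $0<\mu<w<w_c$, so $g_w'(0)=p/w>1$ and $K_w>\xi_0$, and Theorem~\ref{alap} rules out complicated solutions.

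I expect the only real obstacle to be this part (i): everything else is a dictionary translation between $k$ and $w$ together with an appeal to the known trichotomy, whereas (i) requires determining the sign of $\alpha_w-\xi_0$ near the marginal value $w_c$ and then pinning down $k_*$ from the standing hypothesis $\alpha<\xi_0$. The one computation that must be done carefully is the derivative of $\alpha_w$ at $w_c$; the vanishing of $f'(\xi_0)$ is what makes the sign unambiguous, and the rest is bookkeeping.
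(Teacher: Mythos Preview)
Your proposal is correct and follows the same overall strategy as the paper: substitute $w=\mu-k$, read (ii)--(iv) off the trichotomy $(a)/(b)/(c)$, and for (i) verify condition $(L)$ for the $w$-system on an interval adjoining $w_c=f(\xi_0)/\xi_0$ by checking that $\alpha_w=\xi_0$ at $w_c$ and that the derivative there has the right sign.

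The one genuine methodological difference is in how the derivative at $w_c$ is obtained. The paper writes $\tilde\alpha/\xi_0$ explicitly as the rational function
\[
S(w)=\frac{p^2(n-1)n^{n-1}w^{n-2}}{n^n w^n+p^n(n-1)^{n-1}},
\]
computes $S'(w)$ in closed form, and analyses the full shape of $S$ (a unique critical point $\hat w<w_c$, $S(0)=0$, $S(w_c)=1$, $S'(w_c)<0$). Your approach instead differentiates $\alpha_w=w^{-1}f(f(\xi_0)/w)$ abstractly and uses $f'(\xi_0)=0$ to kill the inner term, which is a cleaner way to get the sign of the derivative at $w_c$. What the paper's explicit analysis buys is more: it shows the crossing $w_*$ is \emph{unique} on $(0,w_c)$, so $(w_*,w_c)$ is precisely the set where $(L)$ holds, whereas your intermediate-value argument only produces the first crossing (which is all the theorem claims). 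Conversely, your argument makes transparent why the standing hypothesis $\alpha<\xi_0$ forces $k_*<0$; in the paper this step is left implicit (it amounts to $S(\mu)<1$, hence $\mu<w_*$).
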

	
	\begin{proof} (i)
		For a given $k$, let
		\begin{equation}
			\tilde\beta=\frac{f(\xi_0)}{\mu-k}=\frac{p(n-1)^{\frac{n-1}{n}}}{n(\mu-k)},\qquad
			\tilde\alpha=\frac{f(\tilde\beta)}{\mu-k}=\frac{p^2(n-1)^{\frac{n-1}{n}}n^{n}(\mu-k)^{n}}{n(\mu-k)^2\left(n^n(\mu-k)^n+p^n(n-1)^{n-1}\right)},
		\end{equation}
		and $\tilde g = f/(\mu-k)$.
		Notice that  $k={\mu-f(\xi_0)}/{\xi_0}$ means that $\tilde \alpha=\tilde \beta$, and $\tilde g^2(\xi_0)=\xi_0.$
		Hence, to apply $(L)$ to \eqref{eq:prop}, we want to show that
		$$
		\tilde\alpha\frac{1}{\xi_0}=\frac{p^2(n-1)n^{n-1}(\mu-k)^{n-2}}{\left(n^n(\mu-k)^n+p^n(n-1)^{n-1}\right)}>1
		$$
		for some $k$. For simplicity we write $w=\mu-k$, and let
		$$
		S(w)=\frac{p^2(n-1)n^{n-1}w^{n-2}}{\left(n^nw^n+p^n(n-1)^{n-1}\right)},
		$$
		and $w_0=\frac{p(n-1)}{n}$. 
		It is easy to check that $S(w_0)=1$.
		Furthermore,
		$$
		S'(w)=p^2(n-1)^{\frac{n-1}{n}}n^{n}\frac{p^n(n-1)^{n-1}(n-2)w^{n+1}-2n^nw^{2n+1}}{nw^4\left(n^nw^n+p^n(n-1)^{n-1}\right)^2},
		$$
		hence
		$$
		S'(w_0)=p^2(n-1)^{\frac{n-1}{n}}n^{n}\frac{\left(\frac{p(n-1)}{n}\right)^{n+1}(n-2n)}{nw^4\left(n^nw^n+p^n(n-1)^{n-1}\right)^2}<0
		$$
		and
		$S'(\hat w)=0$ only for $\hat w=\frac{p(n-1)}{n} \sqrt[n]{\frac{n-2}{2n-2}} <w_0.$
		These, together with the facts $S(0)=0$ and
		$	S'(w)>0	$ 
		for $w\in\left(0,\hat w\right)$, imply the existence of a unique $w_*<w_0$ satisfying $S(w_*)=1$.
		That is for $(\mu-k)\in(w_*,w_0)$, every solution enters the interval $[\tilde\alpha,\tilde\beta]$, where $f$ is monotonically decreasing prohibiting the existence of chaotic solutions. Shifting back, $(\mu-k)\in(w_*,w_0)$ is equivalent with $k \in (\mu-w_0,\mu-w_*)$, and with $k_*=\mu-w_*$ and noting that $w_0=f(\xi_0)/\xi_0$, we conclude (i). To see (ii) and (iii), notice that in these cases \eqref{eq:prop} falls in the case of (b) and (c) as described in Section 2, thus Proposition 3.2 and Proposition 3.1 from R\"{o}st \& Wu\cite{rostwu} give the result.  To check (iv), from $x'(t)>(k-\mu)x(t)$ , convergence to infinity is clear for $k>\mu$.
	\end{proof}
	
	Next we give a delay dependent result.
	
	\begin{theorem} \label{propdelaythm} Assume that $K>\xi_0$ and $(L)$ does not hold for $\tilde g$ with some $k$. Then for sufficiently small delay, $(T)$ holds for $\tilde h$. Furthermore, the smaller the delay, the larger the range of $k$  that enables chaos control.
	\end{theorem}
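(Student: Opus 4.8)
The plan is to mirror the structure of condition $(T)$ in Theorem \ref{alap}, now applied to the proportional-feedback equation \eqref{eq:prop}, where the effective decay rate is $w=\mu-k$. Writing $\tilde g(x)=f(x)/w$ and
$$
\tilde h(x)=(1-e^{-w\tau})\tilde g(x)+e^{-w\tau}\tilde K, \qquad \tilde K=(p/w-1)^{1/n},
$$
the goal is to show that whenever $(L)$ fails for $\tilde g$ (i.e. $\tilde g^2(\xi_0)\le \xi_0$ for the relevant $k$), the delay-dependent condition $\tilde h^2(\xi_0)>\xi_0$ nonetheless holds for all sufficiently small $\tau$, and moreover that the admissible set of $k$ grows as $\tau$ decreases.

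First I would examine the limiting behaviour of $\tilde h$ as $\tau\to 0^+$. Since $e^{-w\tau}\to 1$ and $1-e^{-w\tau}\to 0$ in this limit, we have $\tilde h(x)\to \tilde K$ uniformly on compact sets, so $\tilde h^2(\xi_0)=\tilde h(\tilde h(\xi_0))\to \tilde h(\tilde K)\to \tilde K$. Because we are in case \textit{(c)} for the rescaled equation, the positive equilibrium satisfies $\tilde K>\xi_0$, so the limiting value of $\tilde h^2(\xi_0)-\xi_0$ is $\tilde K-\xi_0>0$. By continuity of $\tilde h$ (and hence of $\tilde h^2(\xi_0)$) in the parameter $\tau$, there exists $\tau^*>0$ such that $\tilde h^2(\xi_0)>\xi_0$ for all $\tau\in(0,\tau^*)$, which gives the first assertion via the $(T)$ branch of Theorem \ref{alap}.

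For the monotonicity claim, I would fix the structural picture and treat $\tilde h^2(\xi_0)$ as a function of the two parameters $(\tau,k)$. The key observation is that $\tilde h$ is a convex combination of $\tilde g(x)$ and the constant $\tilde K$ with weight $e^{-w\tau}$ on the constant term; decreasing $\tau$ increases this weight, pushing $\tilde h$ closer to $\tilde K>\xi_0$ and thereby increasing $\tilde h^2(\xi_0)$. One then defines, for each fixed $\tau$, the set of $k$ for which $\tilde h^2(\xi_0)>\xi_0$, and argues by monotone dependence that this set expands as $\tau$ shrinks: the function $(\tau,k)\mapsto \tilde h^2(\xi_0)-\xi_0$ is increasing as $\tau$ decreases, so the superlevel set $\{k:\tilde h^2(\xi_0)>\xi_0\}$ is nested increasingly.

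The main obstacle I anticipate is making the monotone dependence on $\tau$ fully rigorous, because $\tilde h^2(\xi_0)$ is a nested composition in which $\tau$ enters at \emph{both} applications of $\tilde h$, and the inner argument $\tilde h(\xi_0)$ itself moves with $\tau$. A careful chain-rule computation of $\partial_\tau \tilde h^2(\xi_0)$ is needed, and one must control the sign of $\tilde g'$ at the intermediate point $\tilde h(\xi_0)$; since $\tilde g$ is unimodal, this sign is not automatic and requires verifying that the relevant arguments remain on a part of $f$ where the estimate can be closed. I expect this to be where the real work lies, whereas the small-$\tau$ limit is essentially a soft continuity argument.
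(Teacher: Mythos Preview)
Your small-$\tau$ limit argument for the first assertion matches the paper's proof exactly: as $\tau\to 0$, condition $(T)$ degenerates to $\tilde K>\xi_0$, and continuity in $\tau$ gives the result.

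For the monotonicity in $\tau$, the paper takes a more elementary route than your proposed chain-rule computation. Instead of differentiating, it compares $\tilde h_{\tau_1}$ and $\tilde h_{\tau_2}$ directly for $\tau_1<\tau_2$. The key location facts are: (a) $\tilde h_\tau(\xi_0)$ is a convex combination of $\tilde g(\xi_0)>\tilde K$ and $\tilde K$, hence $\tilde h_\tau(\xi_0)>\tilde K$ for every $\tau$, and the weight on $\tilde g(\xi_0)$ grows with $\tau$, so $\tilde h_{\tau_2}(\xi_0)>\tilde h_{\tau_1}(\xi_0)>\tilde K$; (b) for any $\xi>\tilde K$ one has $\tilde g(\xi)<\tilde K$, so the convex combination moves the other way and $\tilde h_{\tau_2}(\xi)<\tilde h_{\tau_1}(\xi)$; (c) $\tilde h_\tau$ is decreasing on $(\tilde K,\infty)$ since $\tilde K>\xi_0$. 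Chaining these,
\[
\tilde h_{\tau_1}^2(\xi_0)=\tilde h_{\tau_1}(\tilde h_{\tau_1}(\xi_0))>\tilde h_{\tau_2}(\tilde h_{\tau_1}(\xi_0))>\tilde h_{\tau_2}(\tilde h_{\tau_2}(\xi_0))=\tilde h_{\tau_2}^2(\xi_0),
\]
so the superlevel set $\{k:\tilde h_\tau^2(\xi_0)>\xi_0\}$ is nested increasingly as $\tau$ decreases, without any calculus.

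Your derivative approach would in fact also work, and the ``main obstacle'' you flag dissolves once you notice point (a) above: the intermediate value $\tilde h_\tau(\xi_0)$ always lies strictly to the right of $\tilde K>\xi_0$, hence on the decreasing branch of $\tilde g$, so the sign of $\tilde g'$ at that point is automatically negative. With that observation both terms in $\partial_\tau \tilde h_\tau^2(\xi_0)$ are negative and you are done. The paper's discrete comparison is simply cleaner and sidesteps the bookkeeping.
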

	\begin{proof} The first statement is obvious, since as $\tau \to 0$, $(T)$ becomes $K>\xi_0$ regardless of $k$. For the second statement, note that the control parameter does not change $\xi_0$, but $K$ becomes $\tilde K$. Fix all the parameters but $\tau$ such that $\tilde K>\xi_0$, let $w=\mu-k$ and denote by $\tilde h_\tau$ the function in condition $(T)$ corresponding to equation \eqref{eq:prop}, belonging to a given $\tau$. We show that if $\tau_1<\tau_2$, then $\tilde h^2_{\tau_1}(\xi_0)> \tilde h^2_{\tau_2}(\xi_0)$.  Since $\tilde g(\xi_0) > \tilde K$,  we have $\tilde h_{\tau_2}(\xi_0)>\tilde h_{\tau_1}(\xi_0)>\tilde K$, and for $\xi>\tilde K$, $\tilde g(x)<\tilde K$ implies $\tilde h_{\tau_2}(\xi)<\tilde h_{\tau_1}(\xi)$. Together with the monotone decreasing property of $\tilde h$ for $\xi>\tilde K$, we find
		$$\tilde h^2_{\tau_1}(\xi_0)=\tilde h_{\tau_1}(\tilde h_{\tau_1}(\xi_0))>\tilde h_{\tau_2}(\tilde h_{\tau_1}(\xi_0))  >\tilde h_{\tau_2}(\tilde h_{\tau_2}(\xi_0))=  \tilde h^2_{\tau_2}(\xi_0).$$
		The conclusion is that for $\tau_1<\tau_2$, if $\tilde h^2_{\tau_2}(\xi_0)>\xi_0$ holds, then  $\tilde h^2_{\tau_1}(\xi_0)>\xi_0$ also holds, thus if $k$ is a good control for some delay (in the sense that $(T)$ holds), it is a good control for all smaller delays as well. The consequence is that for smaller delays we always have a larger range of $k$ such that $(T)$ still holds.
		
	\end{proof}

	\subsection{Pyragas control}
	A popular control mode  is $u(t)=k(x(t-\tau))-x(t)$, and with such term \eqref{mgeq} becomes
	
	\begin{equation}
		x'(t)=-(\mu+k) x(t)+\frac{p x(t-\tau)}{1+x(t-\tau)^n}+ kx(t-\tau),\nonumber
	\end{equation}
	that is
	\begin{equation}
		x'(t)=-(\mu+k) x(t)+F_k(x(t-\tau))\label{eq:pyr}
	\end{equation}
	with $F_k(\xi)=f(\xi)+k\xi$. Notice that while the Pyragas control changes the shape of the nonlineariy, it does not change the equilibria of the system.

	\begin{theorem}\label{pyrthm}
		Assume $K>\xi_0$ and $g^2(\xi_0)<\xi_0$. Then for $k>\frac{p(n-1)^2}{4n}$, all solutions of\eqref{eq:pyr} converge to $K$.
		
	\end{theorem}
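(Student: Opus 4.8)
The plan is to exploit the fact that Pyragas control reshapes the feedback through $F_k(\xi)=f(\xi)+k\xi$ while leaving the equilibria $0$ and $K$ untouched, and to choose $k$ so large that $F_k$ becomes \emph{strictly increasing} on all of $[0,\infty)$. This moves \eqref{eq:pyr} entirely out of the non-monotone regime responsible for the complicated dynamics, turning it into a positive (monotone) feedback equation, to which convergence results of a completely different flavour than Theorem \ref{alap} apply. First I would locate the threshold: since $F_k'(\xi)=f'(\xi)+k$, the sign of $F_k'$ is controlled by $\min_{\xi\ge 0}f'(\xi)$. Substituting $u=\xi^n$ in $f'(\xi)=p(1-(n-1)\xi^n)(1+\xi^n)^{-2}$ reduces the minimisation to that of $p(1-(n-1)u)(1+u)^{-2}$ over $u\ge 0$; a single derivative gives the minimiser $u^\ast=(n+1)/(n-1)$ and the value $\min_{\xi\ge0}f'(\xi)=-p(n-1)^2/(4n)$. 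Hence $k>p(n-1)^2/(4n)$ is exactly the condition guaranteeing $F_k'(\xi)>0$ for every $\xi\ge 0$.

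Next I would record the structure that forces convergence to $K$ rather than merely ruling out chaos. With $F_k$ increasing, \eqref{eq:pyr} generates a \emph{monotone} semiflow on $C_+$. The decisive extra property is sublinearity: $F_k(\xi)/\xi=p(1+\xi^n)^{-1}+k$ is strictly decreasing, equals $\mu+k$ precisely at $\xi=K$ (because $p(1+K^n)^{-1}=\mu$), exceeds $\mu+k$ on $(0,K)$ and is below $\mu+k$ on $(K,\infty)$. Together with $F_k'(0)=p+k>\mu+k$ (the inequality $p>\mu$ being guaranteed by $K>\xi_0$, i.e. case \textit{(c)}), this means $0$ is a repelling equilibrium and $K$ is the unique attracting one, so the monotone--sublinear framework yields global attractivity of $K$ \emph{independently of the delay}.

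Concretely I would make this quantitative through a boundedness estimate followed by a fluctuation argument. Using $F_k(\xi)\le f(\xi_0)+k\xi$ and comparison with the stable linear equation $z'(t)=-(\mu+k)z(t)+kz(t-\tau)+f(\xi_0)$ (stable since $\mu+k>k$) provides an a priori upper bound, so every solution is bounded. Writing $\bar x=\limsup_{t\to\infty}x(t)$ and $\underline x=\liminf_{t\to\infty}x(t)$ and evaluating \eqref{eq:pyr} along sequences realising these values, the monotonicity of $F_k$ gives $(\mu+k)\bar x\le F_k(\bar x)$ and $(\mu+k)\underline x\ge F_k(\underline x)$; since $F_k(\xi)/\xi$ is decreasing and crosses $\mu+k$ only at $K$, these read $\bar x\le K\le \underline x$, whence $\underline x=\bar x=K$.

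The step I expect to be the real obstacle is excluding $\underline x=0$, i.e. proving that nontrivial positive solutions cannot collapse onto the unstable equilibrium $0$ (the fluctuation inequality at the infimum is vacuous there). This is a uniform persistence statement and must be extracted from the instability of $0$: from $F_k(\xi)\ge\rho\,\xi$ for $\xi\in[0,\epsilon_0]$ with some $\rho\in(\mu+k,p+k)$, comparison with the linear equation $z'(t)=-(\mu+k)z(t)+\rho\,z(t-\tau)$, whose dominant characteristic root is positive because $\rho>\mu+k$, forces any solution entering a small neighbourhood of $0$ to grow and leave it; combined with monotonicity this gives $\liminf_{t\to\infty}x(t)>0$. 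Once persistence is secured, the squeezing above closes the argument and yields convergence of all nontrivial solutions to $K$.
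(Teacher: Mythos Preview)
Your threshold computation is exactly the paper's: both substitute $u=\xi^n$, minimise $p(1-(n-1)u)(1+u)^{-2}$, locate the minimiser at $u=(n+1)/(n-1)$ with value $-p(n-1)^2/(4n)$, and conclude that $k$ above this makes $F_k'>0$ everywhere. Where the paper then simply observes that $F_k(\xi)<(\mu+k)\xi$ for $\xi>K$, notes that intervals $[0_*,L_*]$ with $L>K$ are invariant, and defers convergence to Proposition~3.2 of R\"ost--Wu (the monotone positive-feedback case), you instead give a self-contained fluctuation-plus-persistence argument based on the strict decrease of $F_k(\xi)/\xi$. Both routes are correct and rest on the same structural facts (monotone increasing feedback, sublinearity, instability of $0$); yours is more explicit but also more work, and the persistence step you rightly flag as the delicate point is precisely what the cited proposition packages. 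A cleaner way to close that step, still in your spirit, is to use constant sub- and supersolutions: for $0<c<K<L$ the solutions starting from $c_*$ and $L_*$ are monotone in $t$ and converge to $K$, and any nontrivial $\phi\ge 0$ is eventually squeezed between them by the order-preserving property of the semiflow.
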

	
	\begin{proof}
		(i) A straightforward calculation shows that the function $f'(\xi)=\frac{p(1-(n-1)\xi^n)}{(1+\xi^n)^{2} }$ has a minimum when $\xi^n=\frac{n+1}{n-1}:$ let $b(u)=\frac{p(1-(n-1)u)}{(1+u)^{2} },$ then $b'(u)=\frac{p (n (u-1)-u-1)}{(u+1)^3},$ and $b'(u)=0$ exactly at $u=(n+1)/(n-1)$. Therefore $f'(\xi)\geq \frac{-n p}{\left(\frac{n+1}{n-1}+1\right)^2}=\frac{-p(n-1)^2}{4n},$ with equality at that point. Hence if $k>\frac{p(n-1)^2}{4n}$, then $F_k'(\xi)=f'(\xi)+k>0$, and in this case \eqref{eq:pyr} is governed by positive monotone feedback. Since $F_k(\xi)<(\mu+k) \xi$ for $\xi>K$, it is easy to see that any $[0_*, L_*]$ interval is invariant whenever $L>K$, and the same proof as Proposition 3.2. in R\"{o}st \& Wu\cite{rostwu} ensures that all positive solutions converge to $K$.
		
	\end{proof}
	
	When $k<0$, then there is a $\breve \xi$ such that $F_k(\breve \xi)<0$, and then solutions with initial functions satisfying $\phi(0)=0$ and $\phi(-\tau)=\breve \xi$ immediately become negative. Since the non-negative cone is not invariant any more, here we don't discuss Pyragas control with negative $k$. 
	
	When $0<k<\frac{p(n-1)^2}{4n}$ then $F_k(\xi)$ has a bimodal shape, with local extrema $q_1<q_2$. The numbers $q_1$ and $q_2$ can be found as the solutions of
	$f'(\xi)=-k$, which is quadratic in $\xi^n$, so it is possible to find them explicitly, similarly to case (ii) of Theorem \ref{constpert}. It is natural to try to apply an analogue of $(L)$ condition in the bimodal case too, and forcing all solutions into the domain $(q_1,q_2)$, where $F$ is monotone decreasing. Nevertheless, the required conditions $q_1<\alpha_k$ and  $\beta_k<q_2$ become analytically intractable, and one can find parameter settings when they fail when $k$ being near either zero or $\frac{p(n-1)^2}{4n}$. An other possibility is to forcing solutions to the increasing part of $F_k$ thus expecting convergence to $K$ again, so we may require $\alpha_k>q_2$, that is $F_k(F_k(q_1)/(\mu+k))>(\mu+k)q_2$, but again that seems too involved to find a simply interpretable condition.

	\section{State dependent delay control}
	
	From Theorem \ref{alap} is is clear that chaos can be controlled by decreasing the delay to a small quantity, since as $\tau\to 0$, condition $(T)$ becomes $K>\xi_0$, hence for sufficiently small $\tau$, $(T)$ is satisfied. However, it may be impossible or very expensive to permanently keep $\tau$ small, thus here we explore how can we establish chaos control when we modify the delay only temporarily, depending on the current state. Thus, we consider equation
	\begin{equation}
		x'(t)=-\mu x(t)+\frac{p x(t-r(x(t)))}{1+x(t-r(x(t)))^n} \label{sdde}
	\end{equation}
	with state dependent delay $r(x(t))$, where one can interpret $r(x(t))=\tau-k(x(t))$ with baseline delay $\tau$ and delay control $k(x(t))$. It is reasonable to assume $k(x(t))\geq 0 $ and $k(x(t))<\tau$, then $r(x(t))\in (0,\tau]$. We say that a solution is slowly oscillatory, if $x(t)-K$ has at most one sign change on each time interval of length $\tau$.

	\begin{theorem}\label{sddethm}
		Assume $K>\xi_0$ and let $\hat K<\xi_0$ be defined by $f(\hat K)=f(K)$. Let
		$\tau_*:=\min\{\tau, \frac{K-\xi_0}{f(\xi_0)},\frac{\xi_0-\hat K}{f(\xi_0)}\},$ and
		$\zeta=(\tau-\tau_*)(f(\xi_0)-\mu \xi_0)$.
		
		Define the following state dependent delay function:\\
		$r(x)=\tau$ for $x\geq\xi_0+\zeta$;\\
		$r(x)=\tau_*$ for $x \leq \xi_0$;\\
		$r(x)$ is $C^2$-smooth and monotone on $[\xi_0,\xi_0+\zeta]$ with  $r'(x)\leq (f(\xi_0)-\mu\xi_0)^{-1}$.
		
		Then, solutions of \eqref{sdde} eventually enter the domain where $f'$ is negative and slowly oscillatory complicated solutions can not exist.
	\end{theorem}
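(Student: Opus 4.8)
The plan is to combine an a priori bound on the solution with a geometric ``one-way barrier'' argument at the level $\xi_0$, and then reduce to the monotone feedback regime covered by Theorem \ref{alap}. First I would record the universal bounds that hold for every solution of \eqref{sdde}: since $0\le f(\xi)\le f(\xi_0)$ for all $\xi\ge 0$, we have $-\mu x(t)\le x'(t)\le -\mu x(t)+f(\xi_0)$, so by comparison $\limsup_{t\to\infty}x(t)\le f(\xi_0)/\mu=\beta$, and for every $s$ with $x(s)\le\beta$ the derivative obeys the delay-independent slope bound $|x'(s)|\le f(\xi_0)$. Thus for any $\varepsilon>0$ there is a $T$ with $x(s)\le\beta+\varepsilon$ and $|x'(s)|\le f(\xi_0)+\mu\varepsilon$ for all $s\ge T-\tau$.

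Before exploiting these, I would verify that \eqref{sdde} is well posed and that the retarded argument advances monotonically. Writing $\sigma(t)=t-r(x(t))$, the constraint $r'(x)\le(f(\xi_0)-\mu\xi_0)^{-1}$ together with the fact that on the transition interval $[\xi_0,\xi_0+\zeta]$ one has $x'(t)\le -\mu\xi_0+f(\xi_0)=f(\xi_0)-\mu\xi_0$ gives $\sigma'(t)=1-r'(x(t))x'(t)\ge 0$; outside the transition interval $r$ is constant so $\sigma'(t)=1$. Hence $\sigma$ is nondecreasing, the solution is uniquely defined forward in time, and the width $\zeta=(\tau-\tau_*)(f(\xi_0)-\mu\xi_0)$ is exactly what is needed for this slope balance.

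The core step is a barrier argument showing that the level $\xi_0$ can only be crossed upward for large $t$. Suppose $x(t)=\xi_0$; then $r(x(t))=\tau_*$ and the retarded value is $x(t-\tau_*)$. By the slope bound $|x(t-\tau_*)-\xi_0|\le\tau_* f(\xi_0)$, and the two time-scale constraints $\tau_*\le(K-\xi_0)/f(\xi_0)$ and $\tau_*\le(\xi_0-\hat K)/f(\xi_0)$ confine $x(t-\tau_*)$ to $[\hat K,K]$. On this interval $f\ge f(K)=f(\hat K)=\mu K$, because $f$ increases from $\hat K$ up to its maximum at $\xi_0$ and decreases from $\xi_0$ down to $K$. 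Therefore $x'(t)=-\mu\xi_0+f(x(t-\tau_*))\ge\mu(K-\xi_0)>0$, so $x$ strictly increases whenever it meets $\xi_0$; since this inequality has the strict margin $\mu(K-\xi_0)$, a sufficiently small $\varepsilon$ above does not affect the conclusion. Consequently $\{x>\xi_0\}$ is positively invariant for $t>T+\tau$, and the solution cannot stay below $\xi_0$ forever: there it is governed by positive (increasing) feedback with $f(\xi)>\mu\xi$ on $(0,\xi_0)$ and no equilibrium in that interval, so it is pushed up across $\xi_0$. Once $x(s)>\xi_0$ for all $s\ge T_1$, the retarded value also satisfies $x(\sigma(t))>\xi_0$ for $t\ge T_1+\tau$ since $\sigma(t)\ge t-\tau$, so the feedback is evaluated entirely on the decreasing branch of $f$ --- this is precisely the assertion that solutions enter the domain where $f'<0$.

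Finally, on this invariant region \eqref{sdde} is a delay equation with monotone negative feedback, the setting in which chaos is excluded. The remaining --- and in my view hardest --- point is to transfer the no-chaos conclusion to the state-dependent case, since Theorem \ref{alap} and the underlying Poincar\'e--Bendixson theorem of Mallet-Paret \& Sell are stated for constant delay. Here I would use the restriction to slowly oscillatory solutions: the monotonicity of $\sigma$ together with the monotone negative feedback makes the discrete sign-change Lyapunov functional nonincreasing along such solutions, so they remain in a fixed low level set, and consecutive intersections with $K$ define a one-dimensional return map whose monotonicity precludes the sensitive, aperiodic behavior characteristic of chaos. I expect the genuine technical work to lie in making this return-map reduction rigorous for the state-dependent delay, rather than in the trapping estimate, which is elementary once the slope and time-scale bounds are in place.
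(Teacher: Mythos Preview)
Your proposal follows essentially the same architecture as the paper's proof: a priori bound $x\le\beta$ giving the slope estimate $|x'|\le f(\xi_0)$, the one-way barrier at $\xi_0$ via $x(t-\tau_*)\in[\hat K,K]$ forcing $x'(t)\ge\mu(K-\xi_0)>0$, the check that $t\mapsto t-r(x(t))$ is nondecreasing from the bound on $r'$, and then a Poincar\'e--Bendixson argument on the monotone-feedback region. Two places where the paper is more concrete than your sketch: for the claim that solutions cannot remain below $\xi_0$, the paper does not appeal vaguely to positive feedback but introduces the functional $z(t)=x(t)+\int_{t-r(x(t))}^{t} f(x(s))\,ds$, shows it is bounded yet has $z'(t)=-\mu x(t)+f(x(t))\ge\min_{\xi\in[\alpha,\xi_0]}(f(\xi)-\mu\xi)>0$, and derives a contradiction --- your one-line ``pushed up'' needs exactly this device. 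For the final step you flag as the hardest, the paper does not re-derive a return-map reduction but invokes Krisztin and Arino (J.\ Dynam.\ Differential Equations 13 (2001), 453--522), whose Theorem~8.1 already supplies the Poincar\'e--Bendixson theorem for slowly oscillatory solutions of state-dependent delay equations with monotone feedback, once the monotonicity of the retarded argument (their hypothesis (H2)) is verified; so the ``genuine technical work'' you anticipate is available off the shelf.
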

	
	\begin{proof}
		
		The existence and uniqueness of solutions have been discussed in Krisztin and Arino\cite{arino}. Since $\tau_*\leq r(x(t))\leq \tau$, we can deduce that $[\alpha_*,\beta_*]$ is attractive and invariant analogously to the constant delay case Theorem 3.5. in R\"{o}st \& Wu\cite{rostwu}. For solutions in this interval, $|x'(t)|<f(\xi_0)$ holds. Now we claim that positive solutions always go beyond $\xi_0$, i.e. $\limsup_{t \to \infty} x(t) > \xi_0$. Assume the contrary, then there is a solution $x(t)>0$ such that $x(t)<\xi_0+\epsilon$ holds for all $t>t_0$ with some $0<\epsilon<K-\xi_0$. Define $$z(t)=x(t)+\int_{t-r(x(t))}^t f(x(s))ds.$$ Then $z(t)<\xi_0+\epsilon+\tau f(\xi_0)$, but
		$z'(t)=-\mu x(t)+f(x(t))> \min_{\xi \in [\alpha,\xi_0]} (f(\xi)-\mu\xi)>0$ for all $t>0$, which is a contradiction.
		Hence for any positive solution there is a $t_*$ such that $x(t_*)>\xi_0$.  Next we show that for all $t\geq t_*$, $x(t)>\xi_0$ also holds. Assuming the contrary, there exists a $t^*$ such that
		$x(t^*)=\xi_0$ and $x'(t^*)\leq 0$. 
		Note that
		$$\xi_0=x(t^*)=x(t^*-r(\xi_0))+\int_{t^*-r(\xi_0)}^{t^*} x'(s) ds>x(t^*-r(\xi_0))-r(\xi_0) f(\xi_0),$$
		so 
		$x(t-r(\xi_0))<\xi_0+r(\xi_0) f(\xi_0)<K$.  Similarly, 
		$x(t-r(\xi_0))>\xi_0-r(\xi_0) f(\xi_0)>\hat K.$
		But then  $x'(t^*)=-\mu \xi_0 +f(x(t-r(\xi_0)))\geq \mu (K-\xi_0)>0$, a contradiction.
		We conclude that solutions enter the domain where $f'<0$ and remain there.
		To apply the Poincar\'e--Bendixson type results of Krisztin--Arino \cite{arino}, we need to confirm the increasing property of $t\mapsto t -r(x(t))$, cf. condition (H2) of \cite{arino}. This is equivalent to $r'(x)x'(t)<1$, which obviously holds outside $(\xi_0,\xi_0+\zeta)$. Within $(\xi_0,\xi_0+\zeta)$, $x'(t)<f(\xi_0)-\mu\xi_0$ is valid, hence one can find a $C^2$-smooth $r(x)$ such that $r(\xi_0)=\tau_*$, $r(\xi_0+\zeta)=\tau$, and meanwhile $r'(x)\leq (f(\xi_0)-\mu\xi_0)^{-1}$.
		
		Then we can apply Theorem 8.1. of Krisztin \& Arino\cite{arino}, and thus slowly oscillatory solutions converge to $K$ or to a periodic orbit.
	\end{proof}
	
	\begin{remark}
		Some recent results of Kennedy \cite{Kennedy}, that have not been published yet, suggest that Theorem IV.1. can be extended from slowly oscillatory solutions to all solutions.
	\end{remark}
	
	While the control scheme in this theorem may seem complicated, what it really means is that when a solution is approaching $\xi_0$ from above, we decrease the delay in a way that the solution will turn back before reaching $\xi_0$, hence forcing it to stay in the domain where $f'<0$. In particular, $k(x)=0$ for $x\geq \xi_0+\zeta$, $k(x)=\tau-\tau_*$ for $x \leq \xi_0$ and some intermediate control $k(x)$ applied when the solution is in the interval $(\xi_0,\xi_0+\zeta)$. For such equation with state dependent delay the Poincar\'e--Bendixson type theorem was proven only to the subset of slowly oscillatory solutions, hence at the current state-of-the-art of the theory we can not say more, but the applicability of this control scheme will be illustrated in the next Section.

	\section{Applications, simulations and discussion}
	
	We investigated a number of possible mechanisms so that with a well chosen control parameter, an otherwise chaotic Mackey--Glass system is forced to show regular behavior.
	The Mackey--Glass equation was used to model the rate of change of circulating red blood cells, and most of our results have a meaningful interpretation in this context. For example, $u(t)=k$ with $k>0$ may represent medical replacement of blood cells at a constant rate, or $u(t)=kx(t)$ with negative $k$ may represent increased destruction rate of blood cells which can be achieved by administration of antibodies \cite{foley}. Our approach is different from typical chaos control methods, since our strategy is to choose a control such that all solutions will be attracted to a domain where the feedback function is monotone, and then some Poincar\'e--Bendixson type results exclude the possibility of chaotic behavior. By applying this domain decomposition method, which is based on R\"{o}st \& Wu \cite{rostwu}, instead of stabilizing a particular orbit, we push the full dynamics into a non-chaotic regime.

	For $u(t)=k$, clearly $k>0$ helps the cell population, and as Theorem \ref{constpert} shows, with sufficiently large $k$ chaos can always be controlled regardless of the delay.
	A somewhat counterintuitive part of Theorem \ref{constpert} is that for some negative $k$ it is possible to force the system to converge to a positive equilibrium, however one has to be careful as the system will collapse if $k$ is below the threshold $k_1$. This is illustrated in Fig. 1, where on the left we can see how $k>0$ controls a chaotic solution into a periodic one, while in the right we can observe that decreasing $k<0$ first regulate the system into periodic behavior, then to convergence, and finally to collapse (i.e. hitting zero in finite time).  
	
	The proportional feedback control $u(t)=k x(t)$ again helps the population when $k>0$ and when it fully compensates the baseline mortality ($\kappa>\mu$), the population grows and unbounded (Theorem \ref{propthm}, (iv)). Yet, controlling chaos is best achieved with $k<0$, when the destruction of cells is increased, then with a fine tuning of $k$ the dynamics can be made regular (Theorem \ref{propthm}, (i) and (ii)), which is shown on the left panel of Fig. 2. If cell destruction is too high, the population goes extinct (Theorem \ref{propthm}, (iii)). Theorem \ref{propdelaythm} gives a delay dependent result, showing that even if the condition $(L)$ fails, chaos control can be achieved by satisfying $(T)$. We showed that the smaller the delay, the easier the control is, in the sense that we can pick $k$ from a larger range to satisfy $(T)$.
	On the right panel of Fig.2., we illustrated this delay dependent feature: when we switched on the control we decreased the delay temporarily to show that with this smaller delay it is a good control, but when we reset the delay at some time later, the delay dependent condition $(T)$ fails and the solution goes back to irregular mode with the same control.
	
	We also used the popular Pyragas control $u(t)=k(x(t-\tau)-x(t))$. The conclusion of our Theorem
	\ref{pyrthm} is that for positive $k$, the unimodal shape of the nonlinearity turns into a bimodal shape, and when $k$ is large enough (our theorem explicitly tells us how large), then the 
	nonlinearity is transformed into a monotone feedback, as the control term overwhelms the original unimodality. Once we achieved monotonicity, we can use results from R\"{o}st and Wu\cite{rostwu} to prove that solutions converge to the positive equilibrium. Figure 3 left shows how such regulation occurs as we increase $k$. For negative $k$ the non-negative cone is not invariant anymore so we do not consider this possibility. Let us remark that control of Mackey--Glass chaos has been experimentally observed with Pyragas-type control \cite{kittel}, and here our results give an analytic explanation how and why this happens.
	
	Finally, we considered a very different type of control, taking advantage of some results from the theory of state dependent delays. While it is clear from Theorem \eqref{alap} that chaos can be eliminated when the delay is sufficiently small, in Theorem \eqref{sddethm} we constructed a state dependent delay function, that allows us to construct a delay control scheme where the delay is reduced only in a part of the phase space. This is illustrated in the right panel of Figure 3, where we applied delay reduction only in the region $x<K$, and that was sufficient to drive the irregular solution into periodic behavior.

	
	
	%
	%
	
	%
	
	\begin{acknowledgments}
GK was supported by ERC Starting Grant Nr. 259559 and the EU-funded Hungarian grant EFOP-3.6.1-16-2016-00008. GR was supported by OTKA K109782 and Marie Sk\l odowska-Curie grant agreement No 748193.
	\end{acknowledgments}
	

	\begin{figure}
		\includegraphics[scale=0.63]{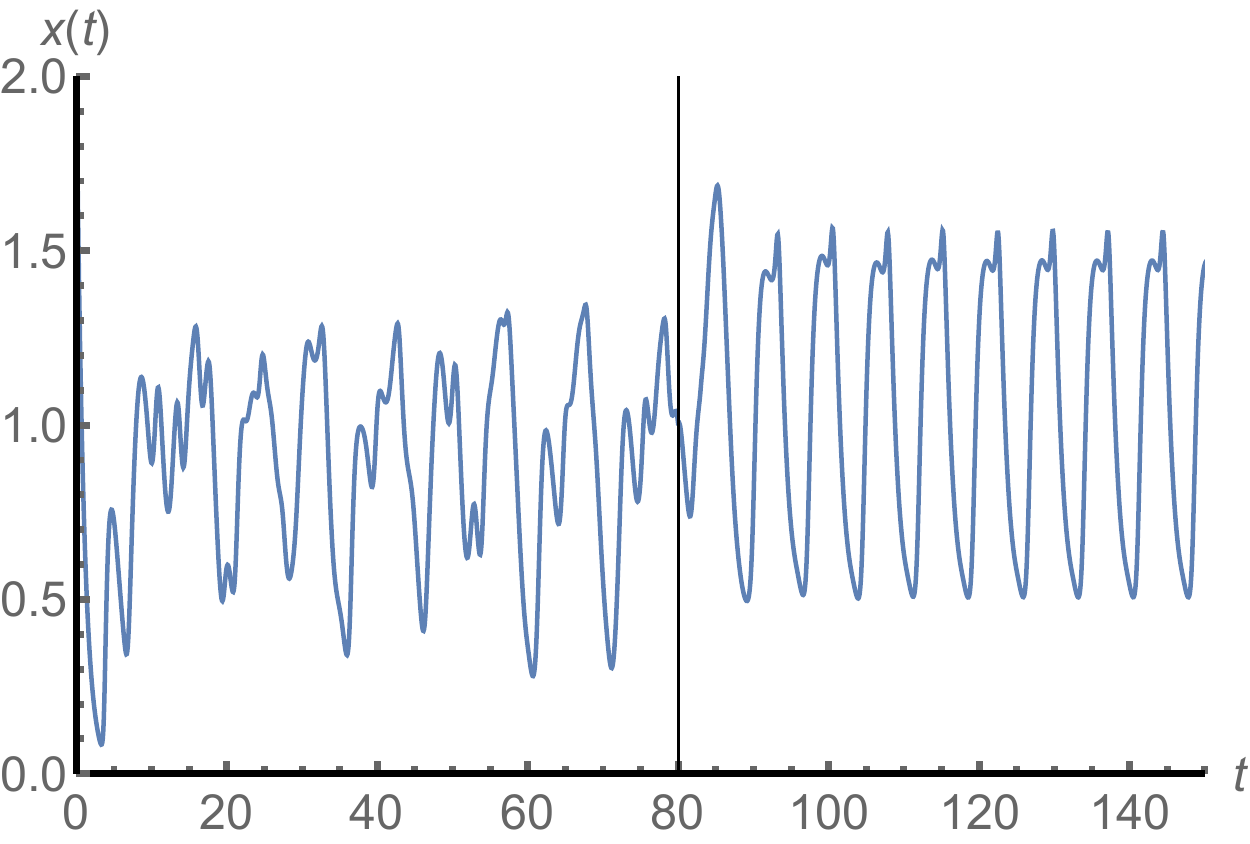}\quad\includegraphics[scale=0.63]{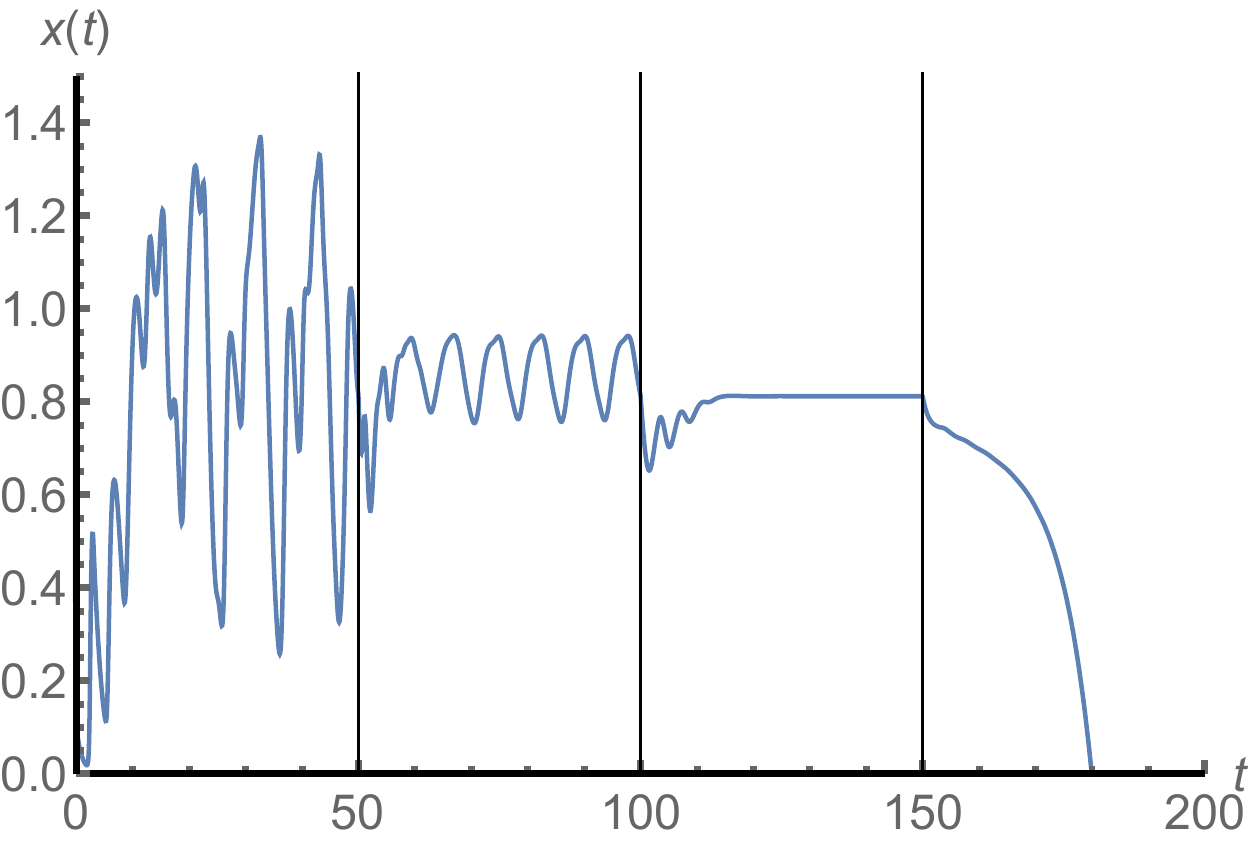}
		\caption{\textbf{ Constant perturbation control: illustrations to Theorem \ref{constpert}}. On the left a numerical solution to \eqref{eq:mgconstc} is plotted. For $0\leq t<80$, there is no control ($k=0$), and the solution is irregular. At $t=80$, we switch on the constant control with $k=0.39$. The initial function was $2+0.02\sin t$.
			On the right, for $0\leq t<50$, $k=0$, and the solution is irregular. At $t=50$, $k$ was decreased to $-0.48$, and the solution becomes periodic. At $t=100$, $k$ is set to $-0.62$, and the solution converges to $K$. From $t=150$, we use $k=-0.69$ and the solution reaches $0$ in finite time. The initial function was $-1.2t+0.1e^t$. In both cases, the parameters were set to $\mu=1,\ \tau=3,\ p=2,\ n=9.65$.} 
	\end{figure}
	
	\begin{figure}
		\includegraphics[height=5.2cm]{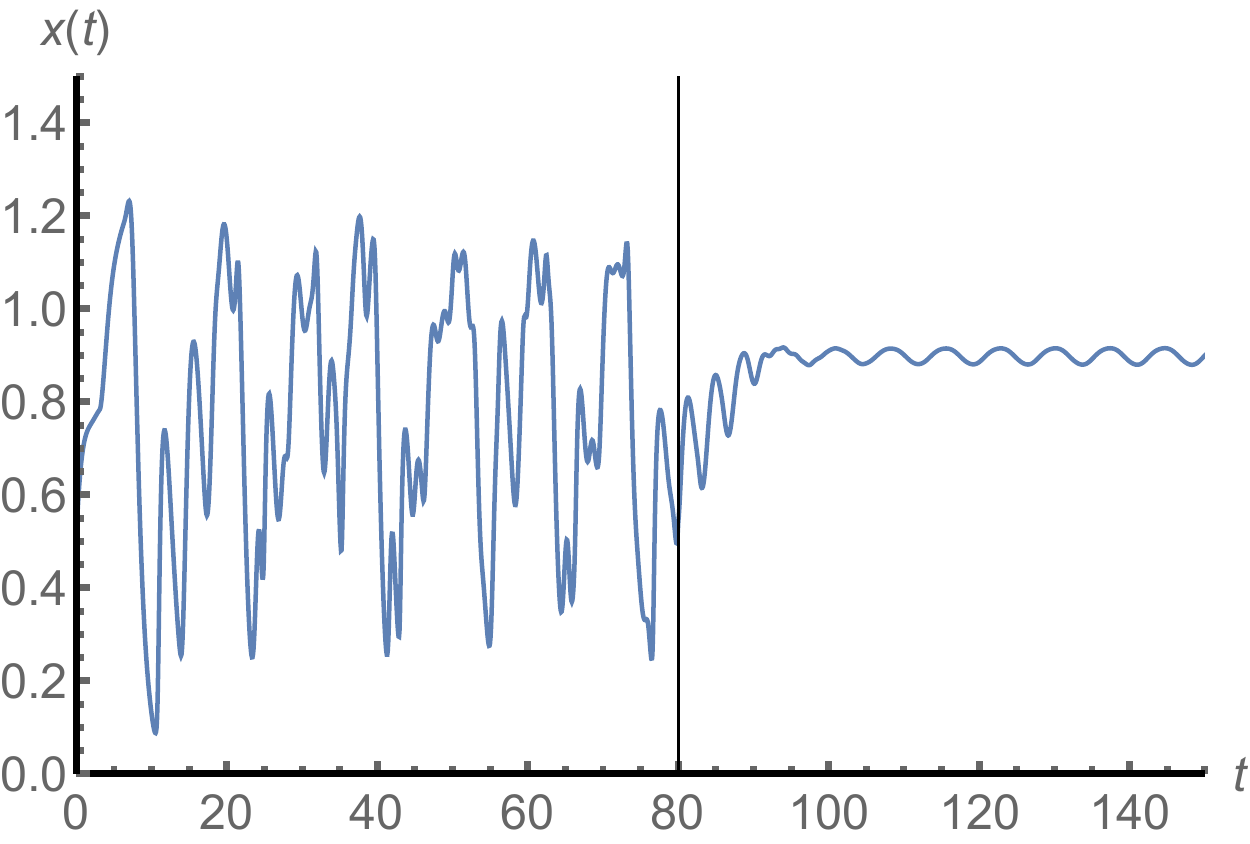}\quad\includegraphics[height=5.2cm]{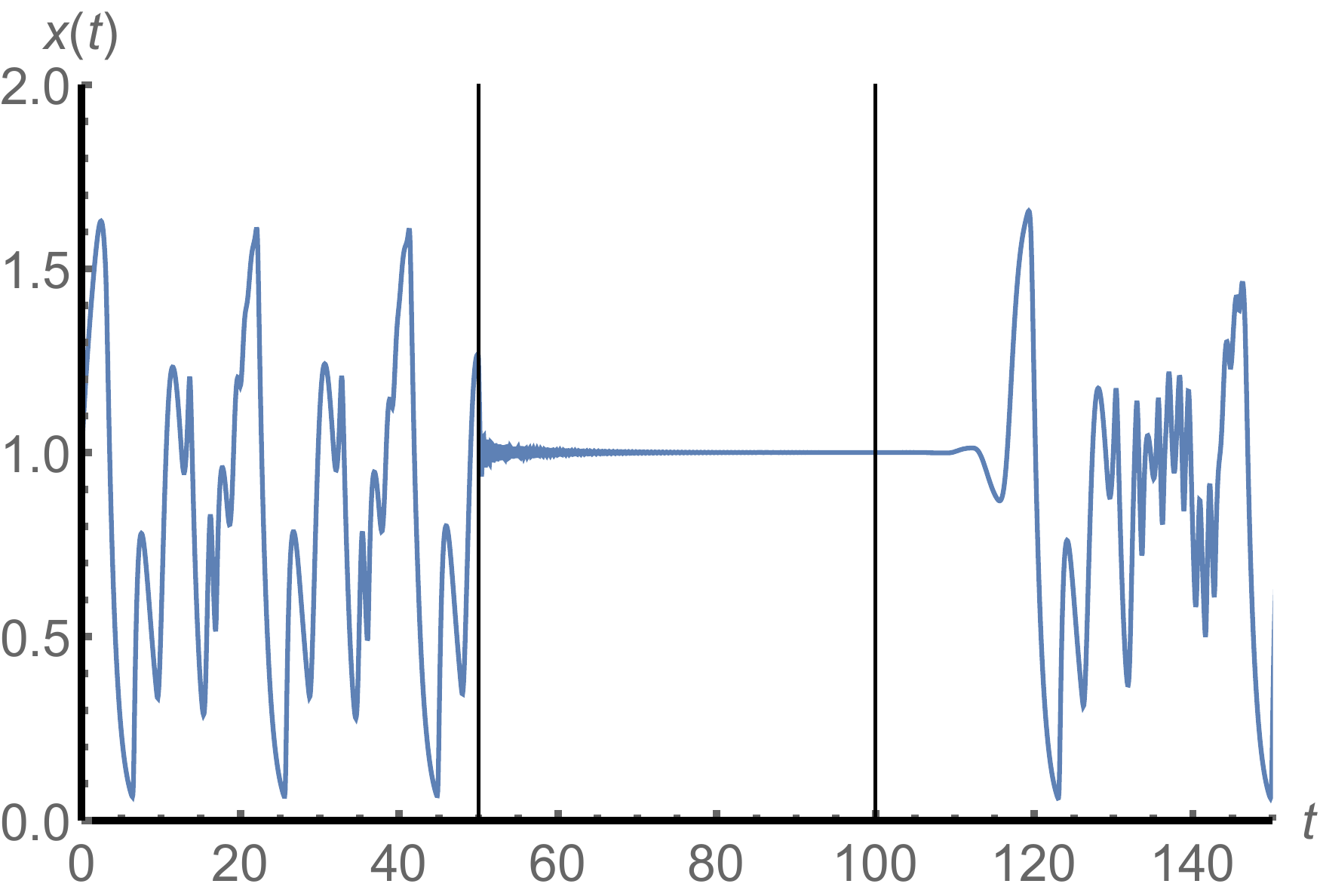}
		\caption{\textbf{Proportional feedback control: illustrations to Theorems \ref{propthm} and \ref{propdelaythm}}. Numerical solutions to \eqref{eq:prop} are plotted. On the left for $0\leq t<80$, there is no control ($k=0$), and the solution is irregular. At $t=80$, we switch on the proportional control with $k=-0.507$. The other parameters were $n=20,\mu=1.275,\tau=3.11,p=2$. With these parameters, the condition in $(i)$ of Theorem \ref{propthm} is satisfied, and the solution converges to a regular oscillation. The initial function was $0.5+0.01\cos 2t$. 
			On the right, for $0\leq t<50$, $\tau=3$ and $k=0$. At $t=50$, to illustrate Theorem \ref{propdelaythm}, $\tau$ is decreased to $0.125$ and $k$ is set to $-0.022$, so $(T)$ holds and the solution behaves regularly. From $t=100$, $\tau=3$, and while $k$ is still $-0.022$, now $(T)$ fails with this larger delay and the solution becomes irregular again. The other parameters were  $n=27.9,\ \mu=0.97,\ p=2$. The initial function was $1+0.1t$.
		} 
	\end{figure}

	\begin{figure}
		\includegraphics[height=5.2cm]{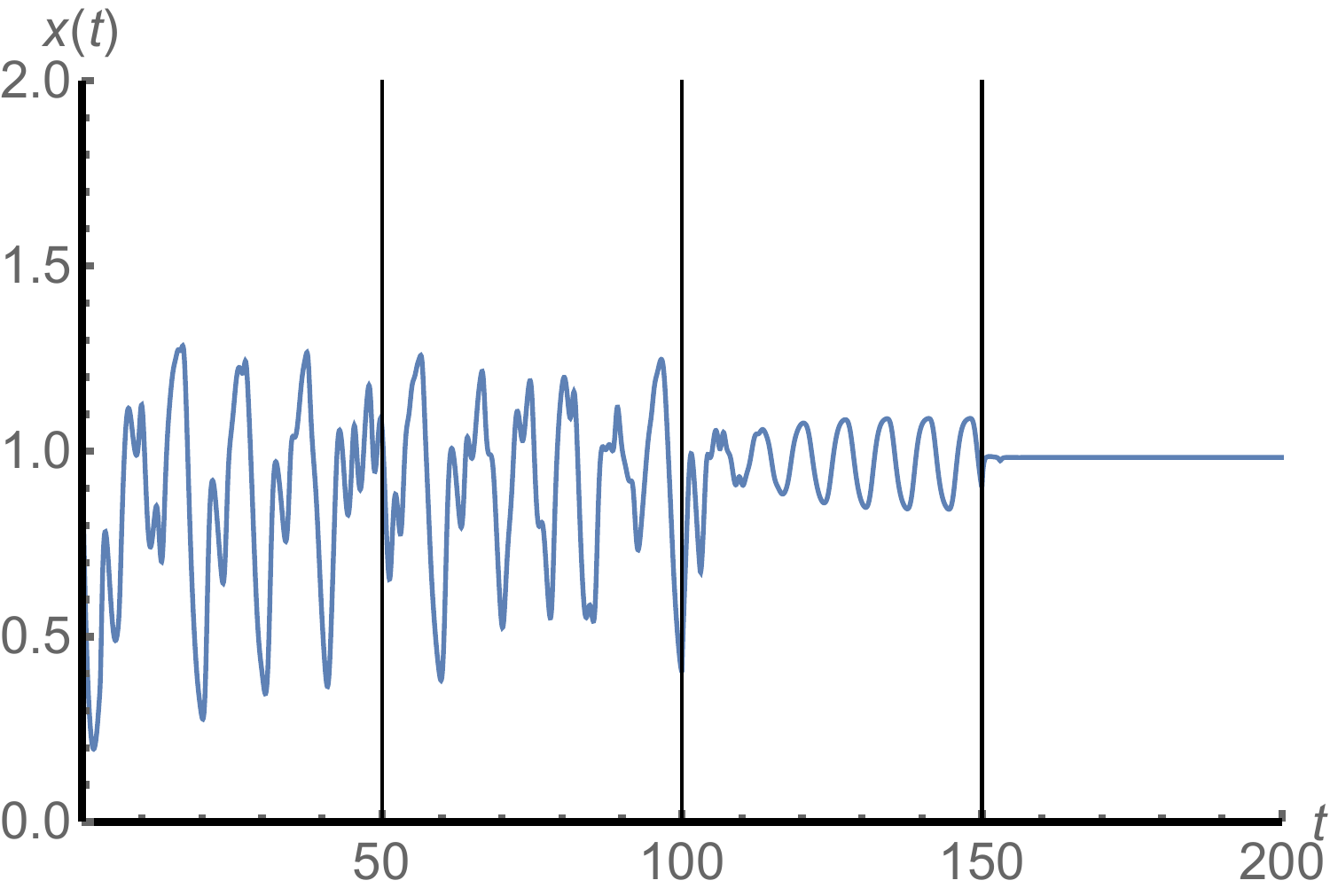}\quad\includegraphics[height=5.2cm]{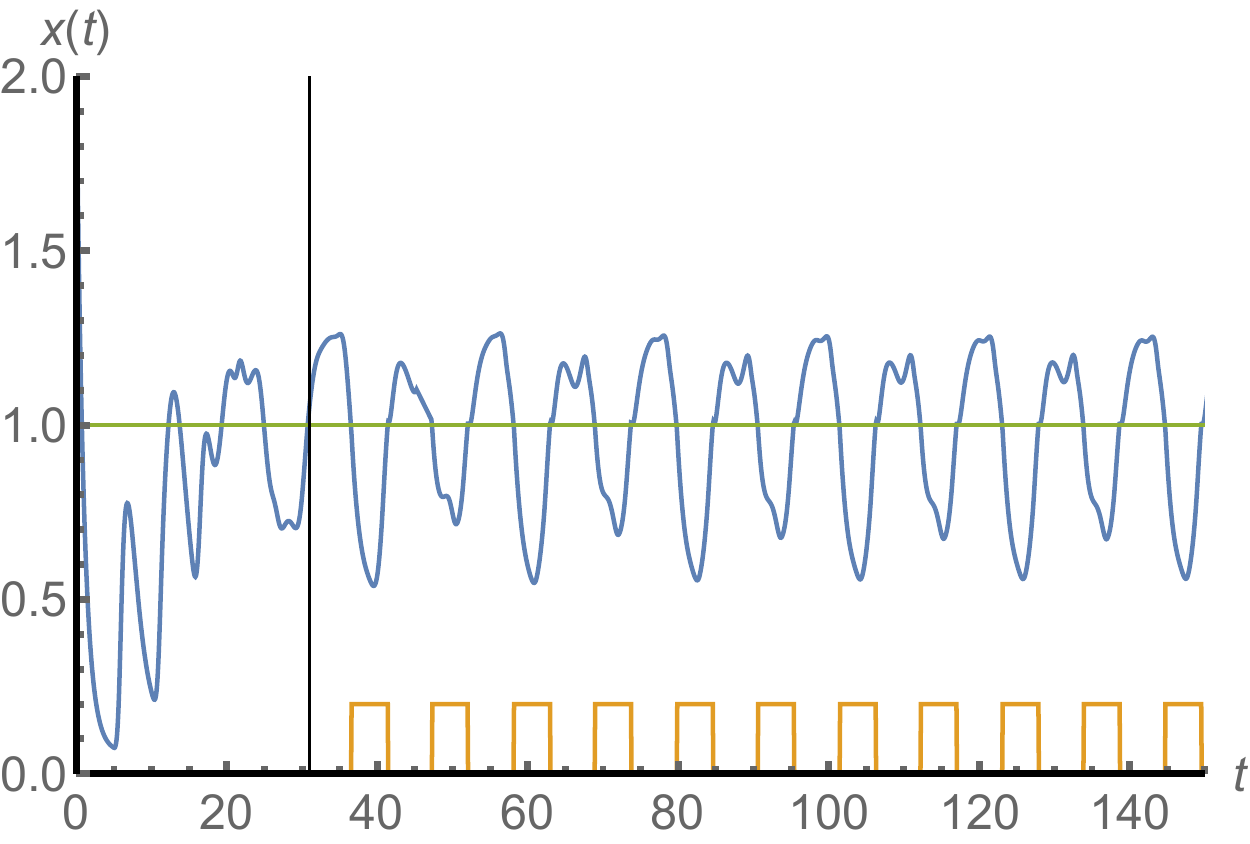}
		\caption{\textbf{Left: Pyragas control  illustration to Theorem \ref{pyrthm}.} Numerical solution to \eqref{eq:pyr} is plotted. For $0\leq t<50$, there is no control ($k=0$), and the solution is irregular. At $t=50$, we switch on the Pyragas control with $k=0.08$ which is too small to cease the irregularity of the solution, which becomes periodic after $t=100$ when the control was increased to $k=0.95$. Finally, the solution converges $K$ after $t=150$ when the control increased further to $k=3.9$, when the condition of Theorem \ref{pyrthm} holds. The other parameters were set to $\mu=1.08,\ \tau=3,\ p=2,\ n=9.65$. The initial function was $1+0.1e^{-t}$. \textbf{Right: State dependent delay control.} A numerical solution to \eqref{sdde} is plotted. We switch on the delay function scheme at $t=31$, which drives the solution to a periodic orbit. The horizontal line shows the equilibrium and it is also the boundary for delay reduction, where for the sake of simplicity we used a step function for $r(x(t))$: the delay is $5$ for $x>K$ and $4$  for $x<K$. In the lower part of the graph it is shown when the delay control is on or off. Parameter values are $n=6$, $p=2$, $\mu=1$, initial function is $2_*$. 
		} 
	\end{figure}%
\end{document}